\newcommand{\QQ}{\mathbb{Q}}
\newcommand{\ZZ}{\mathbb{Z}}
\newcommand{\PP}{\mathbb{P}}
\newcommand{\FF}{\mathbb{F}}
\newcommand{\map}{\rightarrow}
\theoremstyle{plain} 
\newtheorem{lemma}{Lemma}[section]
\newtheorem{theorem}[lemma]{Theorem}
\newtheorem{proposition}[lemma]{Proposition}
\newtheorem{claim*}{Claim}
\theoremstyle{remark}
\newtheorem{remark}[lemma]{Remark}
\begin{document}
\title{Rational points on K3 surfaces of degree 2}
\author{Júlia Martínez-Marín}
\address{School of Mathematics, University of Bristol, Bristol, BS8 1UG, UK}
\email{julia.martinezmarin@bristol.ac.uk}
\subjclass[2020]{Primary 14G05; Secondary 14J28, 11G35}
\keywords{Rational points, K3 surface}

\begin{abstract}
    A K3 surface over a number field has infinitely many rational points over a finite field extension. For K3 surfaces of degree 2, arising as double covers of $\PP^2$ branched along a smooth sextic curve, we give a bound for the degree of such an extension. Moreover, using ideas of van Luijk and a surface constructed by Elsenhans and Jahnel, we give an explicit family of K3 surfaces of degree 2 defined over $\QQ$ with geometric Picard number 1 and infinitely many $\QQ$-rational points that is Zariski dense in the moduli space of K3 surfaces of degree 2.
\end{abstract}

\maketitle

\section{Introduction}

A K3 surface over an algebraically closed field contains a rational (possibly singular) curve \cite[Appendix]{mm}. Therefore a K3 surface over a number field $K$ has infinitely many rational points over a finite field extension of $K$ over which the rational curve is defined and has a rational point. For a K3 surface $X$ of degree 2, i.e. a double cover of $\PP^2$ branched along a smooth sextic curve, we find a bound (independent of $X$ and $K$) for the degree of an extension $L$ of $K$ such that $X$ has infinitely many $L$-rational points. It remains open whether the bound is sharp or not. 

\begin{theorem}\label{bound}
     Let $X$ be a K3 surface of degree 2 over a number field $K$. Then there is a field extension $L/K$ with $[L:K]\leq 12$ such that $X(L)$ is infinite.
\end{theorem}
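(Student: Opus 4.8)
The plan is to use the principle recalled in the introduction: it suffices to produce a single rational curve $R\subset X$ and an extension $L/K$ with $[L:K]\le 12$ such that $R$ is defined over $L$ and $R(L)\neq\emptyset$; then $R(L)\subset X(L)$ is infinite. Writing $\pi\colon X\map\PP^2$ for the double cover and $C$ for the branch sextic, I would seek $R$ among the preimages $\pi^{-1}(\ell)$ of bitangent lines $\ell$ of $C$, these being the rational curves of smallest degree that are available on every such $X$.

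The geometric core is a genus computation. If $\ell$ is tangent to $C$ at $p$ and $q$ and meets $C$ transversally at two further points, then, writing $X$ locally as $z^2=f$, the restriction $f|_\ell$ equals $(s-a)^2(s-b)^2h(s)$ up to a scalar with $\deg h=2$; hence $\pi^{-1}(\ell)$ is the curve $z^2=(s-a)^2(s-b)^2h(s)$, which has a node over each of $p,q$, and its normalization is the smooth conic $w^2=h(s)$ with $w=z/((s-a)(s-b))$. Equivalently, the double cover $\pi^{-1}(\ell)\map\ell\cong\PP^1$ is branched only at the two residual intersection points, so by Riemann--Hurwitz it has geometric genus $0$. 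Thus the normalization $D$ of $\pi^{-1}(\ell)$ is a conic, and it is isomorphic to $\PP^1$ over any field over which it carries a rational point.

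To pass from $D$ to an honest $\PP^1$ I would use the nodes. The node of $\pi^{-1}(\ell)$ lying over the tangency point $p$ is the point $\hat p\in X$ on the ramification curve above $p$, and is defined over $K(p)$; its two branches give two points of $D$, distinguished by the sign of a square root of $h(a)$ and hence defined over a quadratic extension of $K(p,\ell)$. Therefore $D$ acquires a rational point, and becomes $\PP^1$, after a single quadratic extension of the field generated by $\ell$ together with one of its tangency points. The constant $12$ should then factor as $6\times 2$: a rational curve $\pi^{-1}(\ell)$ defined over an extension of degree at most $6$, followed by this quadratic extension that splits the conic.

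The main obstacle, as this makes clear, is the uniform bound of $6$ on the field of definition of the rational curve. A closed point of $C$ of degree at most $6$ is easy to produce --- any $K$-rational line meets $C$ in a degree-$6$ divisor and so yields such a point --- but forcing the associated tangent line to be \emph{bitangent} over a controlled field is delicate: a smooth plane sextic has $324$ bitangents, and for a general sextic these may form a single large Galois orbit, so no individual bitangent can be guaranteed over a small extension. The substance of the theorem must therefore be a construction that ties a bitangent (or, more flexibly, a genus-$0$ curve $\pi^{-1}(\ell)$) to a degree-$\le 6$ point of $C$, and obtaining this $X$- and $K$-independent control is where I expect the real work to lie and where the value $12$ is ultimately pinned down.
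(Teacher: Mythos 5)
Your reduction to rational curves is where the argument breaks, and you have correctly identified the breaking point yourself: for a general smooth sextic $B$ over a number field the $324$ bitangents form large Galois orbits, so no bitangent (nor any of the other finitely many special lines whose preimage has geometric genus $0$) can be guaranteed over an extension of degree at most $6$, or indeed over any extension of degree bounded independently of $X$ and $K$. There is no construction that ``ties a bitangent to a degree-$\le 6$ point of $B$'': the set of lines with genus-$0$ preimage is finite and rigid, whereas the only thing one can produce over a degree-$\le 6$ extension is an ordinary point of the branch sextic (as the root of $f(x,y_0,1)$ for an integer $y_0$). So the gap is not a technical step to be filled inside your framework; the framework itself has to change.

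The paper's resolution is to drop the insistence on genus $0$. It takes the tangent line $\ell$ to $B$ at an \emph{ordinary} point $P$ defined over a degree-$\le 6$ extension $K'$, arranged so that $\ell$ meets $B$ doubly at $P$ and simply at four further points. By the same Riemann--Hurwitz computation you carried out for bitangents, $\pi^{-1}(\ell)$ then has a single node and geometric genus $1$, with normalisation $w^2=h(x)$ for a separable quartic $h\in K'[x]$. The second factor of $2$ in $12=6\times 2$ is then supplied not by splitting a conic but by Proposition \ref{quadraticext}: adjoining $\sqrt{h(\alpha)}$ for varying $\alpha\in K'$ gives infinitely many quadratic extensions over which the Jacobian of this curve acquires a nonzero point, and Merel's uniform torsion bound forces at least one such point to have infinite order, so $X$ has infinitely many points over the corresponding extension $L$ with $[L:K]\le 12$. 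The idea you are missing is that a genus-$1$ curve with a point of infinite order serves exactly as well as a rational curve for producing infinitely many rational points, and such curves --- unlike bitangents --- are attached to \emph{every} point of $B$ and can therefore be found over a uniformly bounded extension.
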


We may then wonder whether the set of rational points over a finite field extension of $K$ is Zariski dense in $X$. If so, we say that the rational points on $X$ are \emph{potentially dense}. Apart from K3 surfaces, all other surfaces of Kodaira dimension $\kappa=0$ have potentially dense rational points \cite[Proposition 4.2]{hasset}, \cite[Theorem 3.1]{enriques}. For K3 surfaces, it is conjectured that the rational points are also potentially dense. In \cite{bogo2}, Bogomolov and Tschinkel show that any double cover of $\PP^2$ branched along a sextic curve with a singular point has potentially dense rational points. The blow-up in the preimage of the singular point under the double cover is an elliptic K3 surface. More generally, Bogomolov and Tschinkel proved potential density of rational points for all elliptic K3 surfaces \cite{Bog}. Such surfaces have geometric Picard number $\rho \geq 2$. 

In \cite{vluijk}, van Luijk gave the first explicit example of a K3 surface over $\QQ$ with geometric Picard number $\rho=1$. With this K3 surface, which is of degree 4, he also gave a positive answer to the question of whether there exists a K3 surface over a number field with infinitely many rational points and geometric Picard number 1, asked by Swinnerton-Dyer and Poonen at a workshop in 2002. Using this, he proved the following theorem. 
\begin{theorem}[van Luijk, 2005 \cite{vluijk}]
In the moduli space of K3 surfaces of degree 4, the set of surfaces defined over $\QQ$ with geometric Picard number 1 and infinitely many $\QQ$-rational points is Zariski dense.     
\end{theorem}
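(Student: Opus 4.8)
The plan is to exhibit a Zariski-dense set of points of the moduli space $\M$ of degree-$4$ K3 surfaces, each represented by a smooth quartic $X\subset\PP^3$ defined over $\QQ$ with geometric Picard number $\rho(X_{\overline{\QQ}})=1$ and $X(\QQ)$ infinite. I identify $\M$ with the GIT quotient of the space $\PP^{34}$ of quartic forms by $\mathrm{PGL}_4$; the quotient map $\pi\colon\PP^{34}\dashrightarrow\M$ is dominant, and every degree-$4$ K3 carries the hyperplane class $H$ with $H^2=4$, so $\rho\geq 1$ always. Van Luijk's explicit surface provides one point of this good locus, so it suffices to spread the example around.

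The certification that $\rho(X_{\overline{\QQ}})=1$ rests on reduction modulo two primes. For a prime $p$ of good reduction one has $\rho(X_{\overline{\QQ}})\leq\rho(X_{\overline{\FF}_p})$, and $\rho(X_{\overline{\FF}_p})$ is bounded above by the number of eigenvalues $\lambda$ of geometric Frobenius on $H^2_{\mathrm{et}}(X_{\overline{\FF}_p},\QQ_\ell)$ for which $\lambda/p$ is a root of unity; these are read off from the characteristic polynomial of Frobenius via point counts of $X$ over $\FF_{p},\FF_{p^2},\dots$ and the Weil conjectures, and their number is always even. I will choose quartics $\overline{X}_1/\FF_{p_1}$ and $\overline{X}_2/\FF_{p_2}$ for which this count equals $2$ at each prime, so that $\rho(X_{\overline{\QQ}})\leq 2$ for any lift; the Artin--Tate formula then expresses, modulo squares, the discriminant of the rank-$2$ lattice that would occur in terms of the special value of the zeta function, and I will arrange the two discriminant classes in $\QQ^\times/(\QQ^\times)^2$ to differ. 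Since a rank-$2$ N\'{e}ron--Severi lattice of $X_{\overline{\QQ}}$ would inject into both reductions with a single discriminant class, $\rho(X_{\overline{\QQ}})=2$ becomes impossible and $\rho(X_{\overline{\QQ}})=1$ is forced.

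Density then follows formally. The integral quartics congruent to $\overline{X}_1$ modulo $p_1$ and to $\overline{X}_2$ modulo $p_2$ form, by the Chinese Remainder Theorem, a full coset of $p_1p_2$ times the lattice of integral quartic forms, which is Zariski dense in $\PP^{34}$; smoothness is a dense open condition preserved on this coset. Because $\pi$ is dominant, the preimage of any proper closed subset of $\M$ is a proper closed subset of $\PP^{34}$, hence cannot contain our Zariski-dense coset; therefore the image under $\pi$ of these surfaces is Zariski dense in $\M$. This already yields a Zariski-dense set of $[X]\in\M(\QQ)$ with $\rho(X_{\overline{\QQ}})=1$.

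It remains to force $X(\QQ)$ to be infinite, which I expect to be the main obstacle. A geometrically integral rational curve $C\subset X$ defined over $\QQ$ with a smooth $\QQ$-point has normalization $\PP^1_\QQ$, so $C(\QQ)\subseteq X(\QQ)$ is infinite; crucially, to leave $\rho(X_{\overline{\QQ}})=1$ undisturbed such a $C$ must lie in the polarization class $H$, i.e. be a rational (three-nodal) hyperplane section. Every smooth quartic has finitely many, but always a positive number of, such rational hyperplane sections, so the difficulty is purely arithmetic: guaranteeing, for a Zariski-dense subfamily, that one of them is Galois-stable and carries a rational point, all while respecting the two congruence conditions above. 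I would resolve this by building the rational curve into the construction from the outset---choosing the reductions $\overline{X}_1,\overline{X}_2$ and a compatible rational hyperplane section over $\QQ$ with a rational point on van Luijk's base surface, then lifting inside the congruence coset intersected with the compatible, positive-dimensional locus carrying such a curve---and finally checking that this refined family still has Zariski-dense image in $\M$ while the two-prime certificate continues to force $\rho=1$ on a dense subset.
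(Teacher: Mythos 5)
This theorem is quoted in the paper as van Luijk's result rather than reproved, but the paper follows exactly his architecture in its degree-$2$ analogue (Theorems \ref{infpts} and \ref{moduli}), so that is the benchmark. Your certification of $\rho(X_{\overline{\QQ}})=1$ --- point counts at two primes of good reduction, the parity of the number of Frobenius eigenvalues that are $p$ times a root of unity, and the comparison of the two discriminant classes in $\QQ^\times/(\QQ^\times)^2$ to exclude rank $2$ --- is precisely van Luijk's method, and your density mechanism (a CRT coset of the lattice of integral quartics, Zariski dense in $\PP^{34}$, pushed forward by the dominant quotient map) is also his. The gap is in the step you yourself identify as the main obstacle. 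Van Luijk does \emph{not} use rational curves: he builds the family so that every member contains a singular hyperplane section of geometric genus $1$ through a prescribed $\QQ$-point, imposed by fixing finitely many coefficients (a linear condition compatible with the congruences); the normalisation is then an elliptic curve over $\QQ$ with a marked rational point, and by Mazur's theorem ``this point is non-torsion'' is the complement of the closed condition $2520P=O$, hence Zariski open, so it suffices to verify it on one explicit surface. The degree-$2$ version of exactly this argument is the content of Section \ref{infsec}: the coefficients of $x^6,\dots,x^4z^2$ are fixed in $M'$ so that the pullback of the tangent line $11y+7z=0$ is a nodal genus-$1$ curve with the rational points $[1:0:0:\pm 1]$.

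Your substitute --- a Galois-stable three-nodal rational hyperplane section with a smooth $\QQ$-point --- enjoys neither of the two features that make the argument work. A smooth quartic has only finitely many rational hyperplane sections (the $3200$ three-nodal plane quartics of the Yau--Zaslow count), permuted by Galois, and the existence of one defined over $\QQ$ with a $\QQ$-point is an arithmetic condition that is neither open nor closed, so it cannot be propagated across your dense coset. The repair you sketch, restricting to quartics containing a fixed such curve $C_0=V(l)\cap V(f)$, fails on dimension grounds: those quartics form the linear system $\{lg_3+cf\}$, a $\PP^{20}\subset\PP^{34}$, while the stabiliser of $C_0$ in $\mathrm{PGL}_4$ contains the four-dimensional group of transformations fixing the plane $V(l)$ pointwise, so the image of this $\PP^{20}$ in the $19$-dimensional moduli space has dimension at most $16$ and is not dense. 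Letting $C_0$ vary does not obviously rescue this, since each admissible $C_0$ must reduce to a rational curve already present on your two chosen reductions, and you would still need a density argument for the resulting countable union. To complete the proof, replace the rational curve by a genus-$1$ section through a fixed rational point, as in \cite{vluijk} and in the proof of Theorem \ref{infpts}.
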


Using van Luijk's method, Elsenhans and Jahnel constructed many examples of K3 surfaces of degree 2 with geometric Picard number 1 in \cite{eljah}. Modifying slightly one of their examples and using the method developed in the proof of Theorem \ref{bound} to find infinitely many rational points, we prove the analogue of van Luijk's theorem for K3 surfaces of degree 2.

\begin{theorem}\label{moduli}
In the (coarse) moduli space of K3 surfaces of degree 2, the set of surfaces defined over $\QQ$ with geometric Picard number 1 and infinitely many $\QQ$-rational points is Zariski dense. 
\end{theorem}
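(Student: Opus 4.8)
The plan is to follow van Luijk's two-step strategy: first produce a single sextic $f_0/\QQ$ whose double cover $X_0$ is a degree-$2$ K3 surface with $\rho(X_{0,\overline{\QQ}})=1$ and $X_0(\QQ)$ infinite, and then deform $f_0$ in a way that simultaneously propagates both properties over a Zariski-dense set of moduli points. Throughout, the infinitely many rational points are supplied by the mechanism behind Theorem~\ref{bound}: if a $\QQ$-line $\ell$ is bitangent to the branch sextic $C=\{f=0\}$, then $f|_\ell=g^2h$ for binary quadratics $g,h$, and the preimage of $\ell$ in $X$ is the $2$-nodal curve $R_\ell\colon v^2=h$, a geometrically rational curve whose class is the polarization $H$ (so its presence is compatible with $\rho=1$). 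If moreover $R_\ell$ carries a smooth $\QQ$-point, then $R_\ell\cong\PP^1_\QQ$ and $X(\QQ)$ is infinite.

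For the base surface I would take the Enselhans--Jahnel sextic, shown in \cite{eljah} to have $\rho(X_{0,\overline\QQ})=1$ by reducing at two primes $p_1,p_2$ of good reduction, bounding $\rho(\overline{X}_{0,p_i})\le 2$ from the Frobenius characteristic polynomial (Weil bounds, the evenness of the geometric Picard number of a K3 over a finite field, and the Tate-conjecture bound on the algebraic eigenvalues), and checking that the discriminants of $\mathrm{NS}(\overline{X}_{0,p_1})$ and $\mathrm{NS}(\overline{X}_{0,p_2})$ differ by a non-square. Since $\rho(X_{0,\overline\QQ})\le\rho(\overline X_{0,p_i})$ with the specialization map preserving the intersection form, the rank cannot be $2$, and as $\rho\ge 1$ it must equal $1$. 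I would then modify $f_0$ slightly so that a fixed $\QQ$-line $\ell_0$ (say $\{z=0\}$) is bitangent to $C_0$ with the residual quadratic $h_0$ having a rational root, producing a smooth $\QQ$-point on $R_{\ell_0}$ and hence $X_0(\QQ)$ infinite; the primes and the discriminant computation are re-run for the modified $f_0$.

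For the density step I would introduce the $\QQ$-variety $W$ of sextics $f$ with $f|_{\ell_0}=g^2h$ and $h$ admitting a rational root. Parametrizing $f$ by the triple $(g,h,q)$, where $q$ is the arbitrary quintic in $f=\widetilde{g^2h}+z\,q$, exhibits $W$ as rational over $\QQ$; a dimension count ($27-2=25$ for the bitangency, fibered over the $19$-dimensional $\M_2$ with $6$-dimensional fibers, since any bitangent of any smooth sextic can be moved to $\ell_0$) shows $W\to\M_2$ is dominant. I would impose the congruences $f\equiv f_0\pmod{p_1}$ and $f\equiv f_0\pmod{p_2}$: every such smooth $f\in W$ has the same reductions $\overline X_{f,p_i}=\overline X_{0,p_i}$, so the argument above yields $\rho(X_{f,\overline\QQ})=1$ verbatim, while membership in $W$ keeps $R_{\ell_0}\cong\PP^1_\QQ$ and hence $X_f(\QQ)$ infinite. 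Because $W$ is rational over $\QQ$ with base point $f_0$, weak approximation makes these congruence-constrained smooth $\QQ$-points Zariski dense in $W$, and their image in $\M_2$ is then Zariski dense by dominance and the continuity of the moduli morphism.

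The main obstacle is the tension between the two constraints. The congruence conditions that guarantee $\rho=1$ freeze the surface modulo $p_1$ and $p_2$, so one is not free to use $\mathrm{PGL}_3(\QQ)$ to place a bitangent at $\ell_0$ after the fact; instead one must build the rational-curve structure into an explicit $\QQ$-rational parameter space $W$ that still dominates $\M_2$, and then verify that the fixed congruence coset meets $W$ in a Zariski-dense set on which $R_{\ell_0}$ retains a $\QQ$-point. The Picard-number input is inherited unchanged from the base example, so the genuine work is the geometry of $W$: establishing its rationality over $\QQ$, its dominance over the moduli space, and the weak-approximation density of the congruence class inside it.
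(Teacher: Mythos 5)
Your blueprint is the same van Luijk strategy the paper follows: an explicit base sextic whose geometric Picard number $1$ is certified by reduction at two primes, a congruence family that freezes those reductions so that $\rho=1$ propagates to a Zariski-dense set (your weak-approximation step is, in substance, the paper's Lemma \ref{Wdense}: density of a coset of $(p_1p_2\ZZ)^N$ in affine space), a geometric structure built into the family forcing infinitely many $\QQ$-points, and a dominance argument onto $\M_2$ by moving a distinguished line into standard position (the paper's Lemma \ref{dominant}). Where you genuinely diverge is the rational-points mechanism. The paper uses a \emph{simple} tangent line at a rational point of the branch sextic: its preimage is a $1$-nodal curve of geometric genus $1$, normalising to $w^2=h(x)$ with $\deg h=4$ and $h(0)=1$ after a constant twist of $f$, and infinitude of points is extracted from a non-torsion point, with Mazur's theorem (``$2520P_2=O$'') supplying the closed locus to excise in Theorem \ref{infpts}. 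You instead use a $\QQ$-rational bitangent, whose preimage is a $2$-nodal rational curve in the class of the polarisation; this buys you a cleaner endgame (no torsion analysis, no appeal to Mazur), and the compatibility with $\rho=1$ is correctly noted. The cost is codimension: passing through a rational point of the branch curve is one linear condition on the sextic, which the Elsenhans--Jahnel example satisfies as it stands, whereas bitangency along a fixed $\QQ$-line together with rationality of a residual intersection point is a codimension-$2$ algebraic condition plus a rationality constraint.

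That cost is where your argument has a genuine gap: the base example is never produced, and it cannot be obtained from the Elsenhans--Jahnel surface by the kind of harmless modification the paper uses. The paper's modification is the rescaling $73w^2=7f$, a $\overline{\QQ}$-isomorphism, so $\rho=1$ is inherited for free and the only new computation is checking that one explicit elliptic curve has trivial torsion. Your modification must change the sextic itself: Galois permutes the $324$ bitangents of a given smooth sextic, so there is no reason the Elsenhans--Jahnel curve has a bitangent defined over $\QQ$, and no $\QQ$-linear change of coordinates will place one at $\{z=0\}$. Moreover, if you keep the reductions modulo $p_1,p_2$ fixed (as your density step requires in order to inherit $\rho=1$ verbatim), then $f|_{z=0}\bmod p_i$ is forced to equal $f_0|_{z=0}\bmod p_i$, and for the congruence coset to meet $W$ at all this reduction must already have a repeated quadratic factor over $\FF_{p_i}$ --- a condition the original example will typically fail. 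You are therefore committed to a fresh search: a sextic in $W(\QQ)$ together with two (possibly new) primes at which the point counts bound $\rho$ by $2$ and the N\'eron--Severi discriminants differ by a non-square. This is very likely to succeed in practice, but as written it is an unverified computational step on which the nonemptiness of your construction, and hence the whole theorem, depends; it is not a formality in the way the paper's single torsion check is.
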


The proof of Theorem \ref{bound} is divided into two steps. The first consists of finding a curve of geometric genus 1 lying on the K3 surface. This can always be done up to enlarging the base field by an extension of degree at most 6, as we prove in Lemma \ref{deg6ext}. The proof of this lemma is constructive, so given an explicit K3 surface $X$ of degree 2 over a number field, one can find an explicit extension over which $X$ contains a curve of geometric genus 1. In fact, in most cases it will suffice to take the smallest extension $L$ of $K$ over which the branch curve $B$ of $X\map \PP^2$ has $B(L)\neq \emptyset$. This gives a way of finding examples of K3 surfaces (of degree 2) with infinitely many rational points over their base field, and this is the key to proving Theorem \ref{moduli}. See Remark \ref{rem:constructive} for more details on this. 

The second ingredient to prove Theorem \ref{bound} involves showing that there is a degree 2 extension $L$ of a number field $K$ over which a smooth curve $C$ of genus 1 given by $y^2=h(x)$ for a polynomial $h\in K[x]$ of degree 4 has infinitely many $L$-rational points. It is not hard to see that this happens if we allow an extension of degree 4 since we can find a quadratic extension $K'$ of $K$ such that $C(K')\neq \emptyset$, so $C$ becomes an elliptic curve over $K'$. Then we can use Merel's theorem \cite[p.~242]{AEC} to see that $C_{K'}$ must have a non-torsion point over some quadratic extension $K''$ of $K'$, i.e. an extension of $K$ of degree 4. In Proposition \ref{quadraticext} we do both steps at once and show that we only require a degree 2 extension.

\subsection{Outline}
We begin in Section \ref{preliminaries} with a brief review on K3 surfaces and relevant invariants of algebraic surfaces. In Section \ref{x6y6z6}, we discuss an example of a K3 surface of degree 2 with no rational points over $\QQ$ but Zariski dense rational points over a quadratic field extension. 
In Section \ref{thm1}, we prove Lemma \ref{deg6ext}, which says that a K3 surface of degree 2 over a number field $K$ contains a singular curve of geometric genus 1 over an extension $K'$ of $K$ of degree at most 6. We show that the normalisation $C$ of this curve is given by $y^2=h(x)$, for a separable polynomial $h\in K'[x]$ of degree 4. Proposition \ref{quadraticext} now says that $C$ becomes an elliptic curve with a non-torsion point over infinitely many quadratic extensions. These two results are combined to prove Theorem \ref{bound}. In Section \ref{thm2}, we give an explicit family of K3 surfaces of degree 2 defined over $\QQ$ with geometric Picard number 1 and infinitely many rational points. Theorem \ref{moduli} follows once we show that this family is Zariski dense in the moduli space of K3 surfaces of degree 2.

\subsection*{Acknowledgements} Most of the results in this paper first appeared in my Master's thesis and I am very grateful to my advisor, Giacomo Mezzedimi, for suggesting the topic and for all the help, time and encouragement he gave me, as well as his valuable comments on an earlier version of the material presented here. I would also like to thank Ronald van Luijk and Matthias Schütt for their suggestions regarding the proof of Proposition \ref{denseratpts}, and Sam Frengley for his insightful comments and his assistance with \textsc{Magma}. I also wish to thank the referee for their helpful suggestions, which have improved the exposition of this paper. This paper was prepared during my time as a PhD student at the University of Bristol, where I am supported by the Heilbronn Institute for Mathematical Research. 

\section{Preliminaries} \label{preliminaries}
Let $X$ be a smooth, projective surface over a field $k$. 
The \emph{Néron--Severi group} of $X$ is the quotient \(\operatorname{NS}X\coloneq\operatorname{Pic} X/\operatorname{Pic}^0X,\)
where $\operatorname{Pic}^0X$ denotes the connected component of the Picard variety $\operatorname{Pic}X$ containing the identity element.
The Néron--Severi group $\operatorname{NS}X$ is a finitely generated abelian group \cite[p.~7]{Huyb}. 
The \emph{Picard number} of $X$ is $\operatorname{rk} \left(\operatorname{NS}X\right)$ and its \emph{geometric Picard number} is $\operatorname{rk} \left(\operatorname{NS} X_{\overline{k}}\right)$, where $X_{\overline{k}}=X\times_k \overline{k}$.
A \emph{genus 1 fibration} is a morphism $\varphi:X\rightarrow C$ onto a smooth, projective, irreducible curve $C$ with generic fibre a smooth curve of genus 1. When $\varphi$ has a section, we call it an \emph{elliptic fibration}.

A \emph{K3 surface} is a smooth, projective, geometrically integral surface $X$ with trivial canonical bundle and $H^1(X,\mathcal{O}_X)=0$. If $X$ is a K3 surface, then $\operatorname{Pic}^0X$ is trivial, $\operatorname{NS}X\cong\operatorname{Pic}X$ is torsion free \cite[Proposition VIII.3.2]{Barth}, and its geometric Picard number is at most 22 (in characteristic 0, it is at most 20) \cite[p.~13]{Huyb}. A \emph{polarised K3 surface of degree} $d>0$ consists of a K3 surface $X$ together with an ample line bundle $\mathcal{L}$ such that $\mathcal{L}$ is primitive, i.e. indivisible in $\operatorname{Pic}X$, with $\mathcal{L}^2=d$ \cite[p.~31]{Huyb}.
By the Riemann--Roch formula the degree of a K3 surface is always even. For any even number $d>0$ there exists a K3 surface of degree $d$ \cite[p.~32]{Huyb}. K3 surfaces of degree 2 arise as double covers of $\PP^2$ branched along a smooth sextic curve, so they are given in the weighted projective space $\PP(1,1,1,3)=\operatorname{Proj}\QQ [x,y,z,w]$ by equations of the form $w^2=f(x,y,z)$, where $f$ is a homogeneous polynomial of degree 6 such that the curve $V(f)\subset \PP^2$ given by the vanishing of $f$ is smooth.

\section{An example} \label{x6y6z6}
Consider the hypersurface $X$ in $\PP(1,1,1,3)$ given by
\[
w^2=-x^6-y^6-z^6.
\]
This is a double cover of $\PP^2_\QQ$ branched along the sextic curve $B=V(-x^6-y^6-z^6)$. Using the Jacobian criterion, one can check that $B$ is smooth, so $X$ is a K3 surface of degree 2. Notice that $X$ has no rational points over $\QQ$. However, when base-changing to the quadratic extension $\QQ(i)$, $X$ already attains at least 3 points: $[1:0:0:i]$, $[0:1:0:i]$ and $[0:0:1:i]$. In fact, $X(\QQ(i))$ is infinite, since $X_{\QQ(i)}$ contains, for example, the line $V(w-ix^3,y-iz)$. It turns out that the set of $\QQ(i)$-rational points $X(\QQ(i))$ is Zariski dense in $X$. This follows from Proposition \ref{denseratpts}, since $X$ is isomorphic over $\QQ(i)$ to the surface $Y$ in the proposition.

\begin{proposition} \label{denseratpts}
Let $Y$ be the hypersurface in $\PP(1,1,1,3)$ given by 
\[
w^2=x^6+ y^6-z^6. 
\]   
Then the set of rational points $Y(\QQ)$ is Zariski dense in $Y$. 
\end{proposition}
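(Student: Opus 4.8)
The plan is to deduce Zariski density from the existence of an elliptic fibration on $Y$, defined over $\QQ$, whose generic fibre has positive Mordell--Weil rank. First I would construct a genus-$1$ fibration $\pi\colon Y\dashrightarrow \PP^1$ over $\QQ$. The surface $Y$ is far from generic: the four lines $x=\pm z$ and $y=\pm z$ in $\PP^2$ are total contact lines of the branch sextic $B=V(x^6+y^6-z^6)$, since each meets $B$ in a single point with multiplicity $6$ (e.g.\ $F|_{x=z}=y^6$). Hence their preimages split on $Y$ into pairs of rational curves, such as $V(x-z,\,w-y^3)$ and $V(x-z,\,w+y^3)$. This produces at least eight $(-2)$-curves on $Y_{\overline\QQ}$, all defined over $\QQ$, and in particular shows $\rho(Y_{\overline\QQ})>1$. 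Computing the intersection pairing of these lines together with the degree-$2$ polarisation $H$ inside $\operatorname{NS}Y$, I would exhibit an isotropic class $f$ with $f^2=0$ supported on them; the linear system $|f|$ then defines the desired elliptic fibration over $\QQ$, with a suitable combination of the lines providing a zero section, so that the generic fibre is an elliptic curve $E$ over $\QQ(t)$.

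Second, I would write $E$ in Weierstrass form over $\QQ(t)$ and produce an explicit further section $P\in E(\QQ(t))$, for instance by restricting one of the remaining rational lines, and prove that $P$ is non-torsion. The cleanest way is to specialise at a rational value $t=t_0$ and verify in \textsc{Magma} that the resulting point on $E_{t_0}/\QQ$ has infinite order, via a canonical-height or reduction computation; by Silverman's specialisation theorem the specialisation map is injective for all but finitely many $t_0$, so a single such $t_0$ certifies that $P$ has infinite order and that $E$ has Mordell--Weil rank $\ge 1$ over $\QQ(t)$.

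Finally, I would deduce density. The multiples $nP$ for $n\in\ZZ$ are infinitely many distinct sections of $\pi$, each a curve defined over $\QQ$ and isomorphic to the base $\PP^1$, hence carrying infinitely many $\QQ$-rational points (those lying over $\PP^1(\QQ)$). If the Zariski closure $Z$ of $Y(\QQ)$ were a proper subset, it would have dimension $\le 1$, so it could contain only finitely many of the $nP$ as components and would meet each remaining section in finitely many points; this contradicts the fact that infinitely many of the $nP$ carry infinitely many $\QQ$-points. Therefore $Y(\QQ)$ is Zariski dense in $Y$.

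The main obstacle is the middle step: guaranteeing that the fibration carries a section of infinite order defined over $\QQ$, i.e.\ that the generic fibre has positive rank. Exhibiting the fibration and a candidate section is elementary once the configuration of lines is understood, but the non-torsion verification is genuinely arithmetic and is where the explicit computation (and the input of van Luijk and Schütt noted in the acknowledgements) is needed; some care is also required to ensure that the chosen isotropic class and the section are rational over $\QQ$ rather than only over $\overline\QQ$.
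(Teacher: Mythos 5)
Your route is genuinely different from the paper's, and the difference matters. The paper never constructs a Jacobian fibration: it exhibits the two genus-one fibrations $[x:y:z:w]\mapsto[w+x^3:y^3\mp z^3]$ (for which the split lines $V(w-x^3,y\pm z)$ are only \emph{trisections}) and invokes van Luijk's two-fibration criterion \cite[Proposition 3.8]{twofibs}, which says that for a surface with two distinct elliptic fibrations there is an explicit closed subset $Z$ such that a single rational point off $Z$ forces Zariski density. The entire arithmetic input then reduces to checking that for the one point $P=[2:3/2:1:69/8]$ the two fibres through $P$ are elliptic curves with trivial torsion over $\QQ$. This is precisely engineered to avoid the step you yourself flag as the main obstacle: producing a section, let alone one of infinite order.

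That step is not merely unperformed in your proposal; with the curves you propose it cannot be carried out. The two components $L^{\pm}$ over a given contact line satisfy $L^{+}\cdot L^{-}=3$ (they meet only at the ramification point, with contact of order $3$, since $L^{+}+L^{-}=\pi^{*}\ell$ and $(\pi^{*}\ell)^2=2$), while components over distinct contact lines meet transversally in $0$ or $1$ points because the six vertices of the quadrilateral do not lie on the branch sextic. A short case check of which component passes through which preimage point shows there is no $3$-cycle among the eight lines, and the cycles that do exist are $4$-cycles using one component over each of the four contact lines, e.g.\ $f=A_{+}+B_{+}+C_{+}+D_{+}$ with $f^{2}=0$. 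But then each of the four remaining lines meets $f$ in degree $3+1=4$, and $f\cdot H=4$; in fact $f\cdot D\equiv 0\pmod 4$ for every class $D$ in the lattice spanned by $H$ and the eight lines. So no combination of these curves is a section of $|f|$ — not even a multisection of odd degree — and your "suitable combination of the lines providing a zero section" does not exist. You would need to find further low-degree curves on $Y$, or a different fibration altogether, and even then positivity of the Mordell--Weil rank over $\QQ$ is an additional arithmetic fact that must be verified, not assumed. Your concluding density argument (infinitely many distinct sections, each a $\PP^1$ over $\QQ$ with infinitely many rational points) is sound once such a non-torsion section exists, but its existence is the whole content of the proposition and is the part that is missing.
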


\begin{proof}
To show this we use \cite[Proposition 3.8]{twofibs}, which states that if a smooth projective surface $Y$ admits two different genus 1 fibrations $f_1$ and $f_2$, then there exists an explicit Zariski closed subset $Z\subset Y$ such that if $Y(\QQ)\backslash Z(\QQ)\neq \emptyset$ then $Y(\QQ)$ is Zariski dense in $Y$. 
In our case, it is enough to take $Z$ to be the union of the singular fibres of $f_1$ and $f_2$ and the sets $T_1$ and $T_2$ which we now define. First, for $i=1,2$ suppose we have a rational map $\alpha_i:Y\dashrightarrow Y$ respecting $f_i$ such that its restriction to the generic fibre $\mathcal{Y}_i$ is not merely a translation by an element of the Jacobian $J(\mathcal{Y}
_i)$. We define $T_i$ to be the union of all points $P$ lying on smooth fibres of $f_i$ such that $\alpha_i(P)$ is a torsion point on the smooth fibre $f_i^{-1}(f_i(P))$ with distinguished point $P$. For more details, see \cite[Section 2]{twofibs}. 

Let us therefore consider the following maps:
\begin{align*}
f_1:  \ \ Y  &  \xrightarrow{} \ \PP^1\\
[x:y:z:w] \ &\mapsto \ [w+x^3 : y^3-z^3],
\end{align*}
\begin{align*}
f_2:  \ \ Y  &  \xrightarrow{} \ \PP^1\\
[x:y:z:w] \ &\mapsto \ [w+x^3 : y^3+z^3].
\end{align*}
Both maps are morphisms outside pure codimension 1 subschemes of $Y$, and since rational maps extend along codimension 1 \cite[Proposition 4.1.16]{Liu}, both $f_1$ and $f_2$ extend to morphisms from $Y$ to $\PP^1$. One can check that $f_1$ and $f_2$ are genus 1 fibrations by using a similar argument to that in \cite[Proof of Theorem 2.1]{Cornnaka}.

Consider the lines $M_1=V(w-x^3, y+z)$ and $M_2=V(w-x^3, y-z)$ contained in $Y$. For each $i=1,2$, note that $M_i$ is a multisection of $f_i$ of degree 3, i.e., $M_i.f_i^{-1}(P)=3$ for general $P\in \PP^1$. Following \cite[Remark 2.4]{twofibs}, we consider the rational map $\alpha_i:Y \dashrightarrow Y $ defined on smooth fibres of $f_i$ by $\alpha_i(P)=R$, where $R$ is the unique point on the fibre $F=f_i^{-1}(f_i(P))$ for which $\mathcal{O}_F(R)$ is isomorphic to the degree-1 line bundle $\mathcal{O}_F(M_i)\otimes\mathcal{O}_F(-2P)$.

Consider the point $P = [2: 3/2:1: 69/8]\in Y(\QQ)$. Then, $P$ is contained in the fibre $F_1=f_1^{-1}([7:1])$, given by
\[
7(w-x^3) = y^3+z^3,
\]
and in the fibre $F_2 := f_2^{-1}([19/5:1])$, given by
\[
19(w-x^3)=5(y^3-z^3),
\]
as subschemes of $Y$. Both $F_1$ and $F_2$ are smooth curves of genus 1 with a rational point $P$, so they are elliptic curves over $\QQ$. One can check computationally that both $F_1$ and $F_2$ have trivial torsion subgroups over $\QQ$. Since $P$ and the divisor corresponding to $\mathcal{O}_{F_i}(M_i)$ are defined over $\QQ$, the point $\alpha_i(P)$ is also rational and, by construction, it is contained in $F_i$. Therefore, $\alpha_i(P)$ is a non-torsion point on $F_i$ for both $i=1,2$, so $P\in Y(\QQ)\backslash Z(\QQ)$, where $Z$ is the closed subset described above. It follows from \cite[Proposition 3.8]{twofibs} that $Y(\QQ)$ is Zariski dense in $Y$.
\end{proof}

\section{Proof of Theorem 1.1} \label{thm1}

\begin{lemma} \label{deg6ext}
Let $X$ be a K3 surface of degree 2 over a number field $K$. Then there is an extension $K'/K$ with $[K': K]\leq 6$ such that $X_{K'}$ contains a singular curve $C$ with geometric genus $p_g(C)=1$. 
\end{lemma}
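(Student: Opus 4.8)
The plan is to produce the required singular genus-$1$ curve as the preimage of a suitable tangent line to the branch sextic under the double cover $\pi\colon X\to\PP^2$. Writing $X$ as $w^2=f(x,y,z)$ with $B=V(f)$ the smooth branch sextic, the preimage $\pi^{-1}(\ell)$ of a line $\ell\subset\PP^2$ is the double cover of $\ell\cong\PP^1$ branched along the degree-$6$ divisor $\ell\cap B$; restricting $f$ to $\ell$ gives a binary sextic $g$, so $\pi^{-1}(\ell)$ is the hyperelliptic-type curve $w^2=g(t)$ of arithmetic genus $2$. For a generic line this is a smooth genus-$2$ curve, and the whole point is to force the geometric genus down to $1$ by taking $\ell$ tangent to $B$.

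First I would carry out the local genus computation. If $\ell$ is tangent to $B$ at a single point $p$ with contact order exactly $2$ and meets $B$ transversally at the remaining four points, then $g(t)=(t-a)^2h(t)$ with $h$ a separable quartic coprime to $(t-a)$. In that case $\pi^{-1}(\ell)$ acquires a node at the point over $p$ (where $w=0$), and the substitution $w=(t-a)y$ exhibits its normalisation as $y^2=h(x)$, a smooth curve of genus $1$; thus $\pi^{-1}(\ell)$ is an irreducible singular curve of geometric genus $1$, defined over the field of definition of $\ell$. (This simultaneously supplies the degree-$4$ model $y^2=h(x)$ used later in Section \ref{thm1}.) A short bookkeeping check on the possible multiplicity partitions of $g$ confirms that the geometric genus equals $1$ precisely when $\ell$ is tangent at a single point with contact order $2$ or $3$; every failure (giving geometric genus $0$) requires either a second tangency or contact order $\ge 4$.

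It remains to find such a tangent line over an extension of degree at most $6$. Since the tangent line $\ell_p$ at a point $p\in B$ is defined over $K(p)$, it suffices to find a \emph{good} point $p\in B$ with $[K(p):K]\le 6$, where good means that $\ell_p$ is tangent to $B$ only at $p$ and with contact order $2$. To bound the degree I would intersect $B$ with an auxiliary $K$-rational line $L$: as $L\cap B$ is a length-$6$ subscheme cut out by a binary sextic over $K$, every point of $L\cap B$ has residue degree at most $6$ over $K$. The main obstacle is that the chosen $p$ must avoid the finite \emph{bad locus} $S\subset B$ consisting of the flexes of $B$ together with the points of tangency of its bitangent lines; this is exactly the finite set isolated by the genus computation above, and controlling it is the delicate part of the argument. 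Since $K$ is infinite and the lines that either pass through a point of $S$ or are tangent to $B$ form a proper Zariski-closed subset of the dual plane $(\PP^2)^*$, I can choose a $K$-rational line $L$ outside it; then $L$ is transverse to $B$ and every point of $L\cap B$ lies in $B\setminus S$, hence is good. Taking $p$ to be any such point and $\ell=\ell_p$ then yields the desired singular curve of geometric genus $1$ on $X_{K'}$, with $K'=K(p)$ and $[K':K]\le 6$.
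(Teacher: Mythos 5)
Your proposal is correct and follows essentially the same route as the paper: pick a point $p$ of degree at most $6$ on the branch sextic $B$ by intersecting with an auxiliary $K$-rational line, take the preimage under $\pi$ of the tangent line $\ell_p$, and check that simple tangency at $p$ alone forces geometric genus $1$. The only (harmless) differences are that you justify genericity by explicitly isolating the finite bad locus (flexes and bitangency points) and avoiding it in the dual plane, where the paper varies the auxiliary line over infinitely many integer parameters, and that you compute the genus by exhibiting the normalisation $y^2=h(x)$ directly rather than invoking a Riemann--Hurwitz formula for singular curves.
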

\begin{proof}
    Since $X$ is a K3 surface of degree 2, it is given by $w^2=f(x,y,z)$ for some homogeneous polynomial $f\in K[x,y,z]$ of degree 6. Consider the double covering
   \begin{align*}
X \ &\xrightarrow{\pi} \ \PP^2_K\\
[x:y:z:w] \ &\mapsto \ [x:y:z],
\end{align*}
and denote the branch curve by $B=V(f)\subset \PP^2_K$. Recall that $B$ is smooth since $X$ is smooth by assumption. 

Let $y_0$ be an arbitrary integer. After potentially changing $y_0$ at most 5 times, we can assume that $f(x,y_0,1)$ is not constant. Thus $f(x,y_0,1)$ is a polynomial in one variable of degree at most 6. If $x_0$ is a root of $f(x,y_0,1)$, we have that $K'=K(x_0)$ has degree $[K':K]\leq 6$. 
By Bézout's theorem, the tangent line $\ell$ to $B$ at $P=[x_0:y_0:1]$ intersects $B$ at 6 points (counting multiplicity). Since $\ell$ is tangent to $B$ at $P$, the intersection of $\ell$ and $B$ at $P$ has multiplicity at least 2. Therefore $\#(B\cap \ell )(\overline{\QQ})\leq 5$. Generically, the tangent line to a plane curve at a point is not tangent at any other points of intersection and has multiplicity 2 at the point of tangency. This guarantees that we can choose $y_0\in \ZZ$ such that the corresponding point $P=[x_0:y_0:1]\in B(K')$ has tangent line $\ell$ satisfying $\#(B\cap \ell)(\overline{\QQ}) = 5$, where $x_0$, $P$ and $K'$ are defined as above for our fixed $y_0$. This means that $\ell$ and $B$ intersect at $P$ with multiplicity 2 and at four other distinct points with multiplicity 1. Note that $\ell=V(ax+by+cz)\subset \PP^2_{K'}$ for some $a,b,c\in K'$ by construction of $K'$. 

We next prove that the (scheme-theoretic) preimage $C\coloneqq \pi^{-1}(\ell)\subset X_{K'}$ is a singular curve with geometric genus 1 defined over $K'$. First note that we can assume after potentially re-choosing $y_0$ that the intersection $(B\cap \ell)(\overline{\QQ})$ does not contain any points with $z$-coordinate zero. Indeed, $B$ only contains finitely many such points, since $\#(B\cap V(z))(\overline{\QQ})\leq 6$ by Bézout's theorem. Then, on the set of lines in $\PP^2$, imposing the conditions that the lines must contain one of the points in $(B\cap V(z))(\overline{\QQ})$ and that they intersect $B$ at a point with multiplicity greater than one gives only finitely many lines, but we had infinitely many choices for $y_0$ and therefore for $\ell$. 
Therefore we can now restrict to the affine chart where $z\neq 0$.

Suppose $b\neq 0$. We have that $C=\pi^{-1}(\ell)\subset X_{K'}$ is given by
\[
w^2=f(x,(-ax-c)/b,1)=\alpha (x-x_0)^2(x-x_1)(x-x_2)(x-x_3)(x-x_4),
\]
for some $\alpha\in K'$, and where $x_0\in K'$ is as above and $x_1,x_2,x_3,x_4$ are the (pairwise different) $x$-coordinates of the four points in $(B\cap \ell)(\overline{\QQ})\backslash \{P\}$. Since $f(x,(-ax-c)/b,1)\in K'[x]$, we have that $C$ is defined over $K'$. One can check that $C$ has a singularity at $[x_0:(-ax_0-c)/b:1:0]$. 

If $b=0$, then $a\neq 0$ since otherwise, $\ell=V(ax+cz)\subset \PP^2_{K'}$ would not contain the point $P=[x_0\colon y_0\colon 1]$. Then, similarly as above, the preimage $C=\pi^{-1}(\ell)\subset X_{K'}$ is given by 
\[
w^2=f(-c/a, y, 1)=(y-y_0)^2h(y),
\]
where $y_0\in K'$ is as above and $h\in K'[y]$ is a separable polynomial of degree 4. In this case, $C$ is singular at $[-c/a:y_0:1:0]$.

In either case, the curve $C$ is of the form $w^2=(x-x_0)^2h(x)$ for a separable polynomial $h\in K'[x]$ of degree 4. Setting $w'=w/(x-x_0)$, we get that $C$ is birationally equivalent to the curve $\widetilde{C}$ given by $(w')^2=h(x)$, which has genus 1. Thus $p_g(C)=p_g(\widetilde{C})=1$, which completes the proof. 
\end{proof}

\begin{proposition} \label{quadraticext}
    Let $C$ be a genus 1 curve over a number field $K$ given by $y^2=h(x)$, where $h\in K[x]$ is a separable polynomial of degree 4. Then there are infinitely many quadratic extensions $L/K$ for which $C_L$ is an elliptic curve with an $L$-rational non-torsion point. 
\end{proposition}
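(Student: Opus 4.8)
The plan is to produce the required extensions by specialising the $x$-coordinate. For $x_0 \in K$ with $h(x_0) \neq 0$ and $h(x_0) \notin (K^\ast)^2$, set $d = h(x_0)$, $L = K(\sqrt d)$, and $P_{x_0} = (x_0, \sqrt d) \in C(L)$. Its Galois conjugate $\overline{P}_{x_0} = (x_0, -\sqrt d)$ also lies in $C(L)$, so $C_L$ is an elliptic curve once we take $\overline{P}_{x_0}$ as origin, and under the Abel isomorphism $C_L \xrightarrow{\sim} J_L$, $Q \mapsto [Q] - [\overline{P}_{x_0}]$, with $J = \operatorname{Jac}(C)$, the point $P_{x_0}$ corresponds to $D_{x_0} \coloneq [P_{x_0}] - [\overline{P}_{x_0}] \in J(L)$. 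Thus the statement reduces to showing that for infinitely many square classes $d = h(x_0)$ the class $D_{x_0}$ is non-torsion in $J(L)$.

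First I would show $D_{x_0}$ is non-torsion as soon as $x_0$ has large height. Let $\iota$ denote the hyperelliptic involution $(x,y)\mapsto(x,-y)$, so $\overline{P}_{x_0} = \iota P_{x_0}$ and $D_{x_0} = \Psi(P_{x_0})$ for the map $\Psi \colon C \map J$, $Q \mapsto [Q]-[\iota Q] = 2[Q]-\mathfrak{h}$, where $[Q]+[\iota Q] = \mathfrak{h}$ is the fixed degree-$2$ class pulled back from the $x$-line. Since the Abel map is an isomorphism in genus $1$ and multiplication by $2$ is non-constant, $\Psi$ is a non-constant morphism of smooth projective curves, hence finite, so $\Psi^\ast\mathcal{D}$ is ample of positive degree for any ample $\mathcal{D}$ on $J$. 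Taking $\mathcal{D}$ with Néron--Tate height $\hat{h}_J = h_{J,\mathcal{D}}+O(1)$, the function $\hat{h}_J\circ \Psi$ is an ample Weil height on $C$, and comparing it with the degree-$2$ height $H\circ x$ pulled back from $\PP^1$ yields a bound $\hat{h}_J(D_{x_0}) = h_{C,\Psi^\ast\mathcal{D}}(P_{x_0})+O(1) \geq \gamma\, H(x_0) - \gamma'$ with $\gamma>0$. As $H(x_0)\to\infty$ this tends to infinity, and since torsion points have vanishing Néron--Tate height, $D_{x_0}$ (equivalently $P_{x_0}$ on $C_L$) is non-torsion for all $x_0$ of sufficiently large height.

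It remains to see that such $x_0$ realise infinitely many distinct quadratic fields, i.e. that $\{h(x_0)\,(K^\ast)^2 : x_0 \in K\}$ is infinite. Assuming only finitely many square classes $c_1,\dots,c_m$ occur, every $x_0\in K$ with $h(x_0)\neq 0$ is the $x$-coordinate of a $K$-rational point on one of the finitely many genus-$1$ curves $c_i v^2 = h(u)$. But the number of $x$-coordinates of bounded logarithmic height coming from a fixed genus-$1$ curve grows only polynomially in the height bound (by quadraticity of the canonical height together with Northcott), whereas $\#\{x_0 \in K : H(x_0)\le t\}$ grows exponentially in $t$; finitely many such curves therefore cannot cover $\PP^1(K)$, a contradiction. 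Restricting to $x_0$ of height above the threshold of the previous paragraph removes only finitely many elements and hence still meets infinitely many square classes, each giving a distinct $L = K(\sqrt{h(x_0)})$ on which $C_L$ has an $L$-rational non-torsion point.

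I expect the main obstacle to be the height lower bound $\hat{h}_J(D_{x_0}) \geq \gamma\, H(x_0) - \gamma'$: one must verify that $\Psi$ is genuinely non-constant, so that $\Psi^\ast\mathcal{D}$ has strictly positive degree, and then invoke the comparison of ample heights on the curve $C$ carefully enough to obtain a \emph{positive} multiplicative constant $\gamma$. It is this strict positivity that guarantees that \emph{every} large-height $x_0$, and not merely almost every one, yields a non-torsion point, which is what makes the argument of the last paragraph go through.
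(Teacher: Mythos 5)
Your proof is correct, but it takes a genuinely different route from the paper's at the key step. Both arguments specialise the $x$-coordinate, pass to $L=K(\sqrt{h(x_0)})$, and view the resulting point as a non-identity element of $J(L)$; the difference is how non-torsionness is forced. The paper assumes for contradiction that all the points so obtained are torsion and invokes Merel's uniform boundedness theorem over fields of degree at most $2[K:\QQ]$ to bound their orders by a single $N$, which is incompatible with $J$ having only finitely many points of order at most $N$ while infinitely many distinct points arise (your observation that $\Psi$ has degree $4$ is exactly what makes that last count work). You instead note that $\Psi=\bigl(\text{mult.\ by }2\bigr)\circ\bigl(\text{Abel}\bigr)$ up to translation is a finite morphism, so $\hat{h}_J\circ\Psi$ is an ample Weil height on $C$ bounded below by a positive multiple of the height of $x$, whence $D_{x_0}$ is non-torsion for \emph{every} $x_0$ of sufficiently large height. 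This trades the very deep black box of Merel for standard N\'eron--Tate and Mordell--Weil theory and yields a stronger, quantitative conclusion; it also makes explicit a point the paper leaves implicit, namely that infinitely many distinct quadratic extensions actually occur, which your polynomial-versus-exponential counting argument (finitely many square classes would force $\PP^1(K)$ to be covered by $x$-coordinates of $K$-points of finitely many genus-one curves) settles cleanly. The only blemishes are cosmetic: you conflate multiplicative and logarithmic heights in the inequality $\hat{h}_J(D_{x_0})\geq \gamma\, H(x_0)-\gamma'$, and you should remark that at most one of the realised square classes is trivial, so all but finitely many admissible $x_0$ give genuinely quadratic $L$; neither affects the argument.
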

\begin{proof}
    Let $\alpha\in K$ and consider the field extension $K_\alpha=K(\sqrt{h(\alpha)})$. Unless $h(\alpha)=0$, 
the curve $C$ has at least two rational points over $K_\alpha$ corresponding to $w=\pm \sqrt{h(\alpha)}$. 
Let $J$ denote the Jacobian of $C$ over $K$. Since $C$ is smooth and has genus 1, $C(K_\alpha)\neq \emptyset$ implies that $C_{K_\alpha}\cong J_{K_\alpha}$ and therefore $C(K_\alpha)\cong J(K_\alpha)$. 
Letting $\alpha$ vary in $K$, we get infinitely many quadratic extensions $K_{\alpha}$ of $K$ over which $J$ attains a non-identity point. Assume these are all torsion points. Then Merel's theorem (see \cite[p.~242]{AEC} for the statement, \cite{Merel} for the proof) tells us they all have orders bounded above by a fixed integer $N$. But since $J$ is an elliptic curve, we know that its $m$-torsion points $J[m]\coloneqq \{P\in J(\overline{\QQ}) \colon [m]P=O\}$ satisfy \(J[m]\cong \ZZ/m\ZZ \times \ZZ/m\ZZ,\) for any $m\geq 1$. Therefore $J$ cannot have infinitely many distinct points of order at most $N$. 
This implies that, for infinitely many $\alpha\in K$, the curve $C_{K_{\alpha}}$ is an elliptic curve with a non-torsion point. 
\end{proof}

\begin{proof}[Proof of Theorem \ref{bound}]
By Lemma \ref{deg6ext} there is an extension $K'/K$ with $[K' : K]\leq 6$ such that $X_{K'}$ contains a singular curve $C/K'$ of geometric genus 1. By construction of $C$ in the proof of the lemma, we may assume it is given by 
\[
w^2=\alpha (x-x_0)^2(x-x_1)(x-x_2)(x-x_3)(x-x_4),
\]
where $\alpha,x_0\in K'$ and $x_1,x_2,x_3,x_4\in \Bar{\QQ}$ are pairwise distinct and also different from $x_0$. Then, the normalisation $\widetilde{C}$ of $C$ is given by 
\[
w^2=\alpha (x-x_1)(x-x_2)(x-x_3)(x-x_4)\eqcolon h(x),
\]
where $h(x)\in K'[x]$.

By Proposition \ref{quadraticext} there is a quadratic extension $L$ of $K'$ over which $\widetilde{C}_L$ is an elliptic curve with a non-torsion point $Q$. Taking the subgroup generated by $Q$ yields infinitely many points in $\widetilde{C}$ over
$L$,
and since we have the morphisms 
\[
\widetilde{C} \xrightarrow{\nu} C \hookrightarrow X_{K'} \rightarrow X,
\]
we also get infinitely many $L$-rational points in $X$. By construction, $L$ is an extension of $K$ of degree $[L : K]=[L: K'][K': K]\leq 2\cdot 6 =12$.
\end{proof}

\begin{remark}
In the proof of Lemma \ref{deg6ext}, we take the preimage under the double cover $X\rightarrow \PP^2$ of a line tangent to the branch curve and this allows us to get infinitely many rational points on $X$ over a finite field extension. If instead we had taken a transversal line, intersecting the branch curve at 6 distinct points, this method would not have worked: the preimage of such a line would be a curve $C$ of genus 2 by the Riemann--Hurwitz formula, and by Faltings's theorem  \cite{faltings}, $C(K)$ is finite for any number field $K$.
\end{remark}

\begin{remark} \label{rem:constructive}
The proof of Lemma \ref{deg6ext} is constructive and gives a way to find examples of K3 surfaces with infinitely many rational points over their base field, as we now explain. Start with a K3 surface $X$ of degree 2 over a number field $K$ given by $w^2=f(x,y,z)$ such that the branch curve $B=V(f)$ has a rational point. Let $C$ be the smooth curve of genus 1 obtained by normalising the singular curve arising from Lemma \ref{deg6ext}. By construction, $C$ is given by $w^2=h(x)$, where $h\in K[x]$. If $s\coloneqq h(0)\neq 0$, consider the surface $X'$ given by $w^2=f(x,y,z)/s$. Then, repeating the process for $X'$ gives a smooth genus 1 curve $C'$ of the form $w^2=h(x)/s\eqqcolon h'(x)$, so $h'\in K[x]$ satisfies $h'(0)=1$. This means that $C'$ has at least two rational points corresponding to $x=0, w=\pm 1$. Then, $C'$ is an elliptic curve and one can easily determine if one of these points in $C'$ is non-torsion, in which case $X'$ has infinitely many $K$-rational points. This is precisely how the surface $X_0$, used in the proof of Theorem \ref{infpts}, was found.
\end{remark}

\section{K3 surfaces of degree 2 with Picard number 1 and infinitely many rational points} \label{thm2}

In this section we show that there exists a family of K3 surfaces of degree 2 with geometric Picard number 1 and infinitely many rational points over $\QQ$ and we use this to prove Theorem \ref{moduli}. We follow closely the ideas of van Luijk in \cite{vluijk}, where he solved the same problem for quartic K3 surfaces, and we adapt them to K3 surfaces of degree 2. 

Let $h\in\ZZ[x,y,z]$ be a (possibly zero) homogeneous polynomial of degree 6, and let $\mathfrak{X}_h$ be the projective scheme over $\operatorname{Spec}\ZZ$ given by 
\begin{align}\label{X_h}
\begin{split}
73w^2 = & \ 
 7(11x^5y+7x^5z+x^4y^2+5x^4yz+7x^4z^2+7x^3y^3+10x^3y^2z+5x^3yz^2+4x^3z^3 \\
& +6x^2y^4+5x^2y^3z+10x^2y^2z^2+5x^2yz^3+5x^2z^4+11xy^5+5xy^3z^2+12xz^5 \\
& +9y^6+5y^4z^2+10y^2z^4+4z^6+15h(x,y,z)).
\end{split}
\end{align}
We denote its base change to $\QQ$ and $\overline{\QQ}$ by $X_h$ and $\overline{X}_h$, respectively. Note that when $h(x,y,z)=0$, this surface is isomorphic over $\overline{\QQ}$ to the one in \cite[Corollary 30]{eljah}

We will first show that the surfaces $X_h$ are smooth, so they are indeed K3 surfaces. We show it in the same way as van Luijk did in \cite[Theorem 3.1]{vluijk} for a family of quartic K3 surfaces.

\begin{proposition} \label{XhK3}
    Let $h\in \ZZ[x,y,z]$ be a homogeneous polynomial of degree 6. Then the surface $X_h$ is a K3 surface over $\QQ$ of degree 2.
\end{proposition}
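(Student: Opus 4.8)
```latex
The plan is to prove that $X_h$ is a K3 surface of degree 2 by verifying smoothness, since every smooth double cover of $\PP^2$ branched along a smooth sextic is automatically a K3 surface of degree 2, as recalled in Section \ref{preliminaries}. A double cover $w^2 = F(x,y,z)$ in $\PP(1,1,1,3)$ is smooth precisely when the branch sextic $V(F)\subset \PP^2$ is smooth, so the whole problem reduces to showing that the sextic curve cut out by the right-hand side of \eqref{X_h} is nonsingular over $\overline{\QQ}$.

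The key idea, following van Luijk's strategy in \cite[Theorem 3.1]{vluijk}, is to exploit reduction modulo a well-chosen prime to certify smoothness cheaply and uniformly in $h$. First I would note that the coefficient $73$ on the left and the factor $7$ and the term $15h$ on the right are all chosen so that the defining polynomial reduces nicely modulo a small prime. Reducing modulo $5$ kills the factor $15h$ (since $15 \equiv 0 \pmod 5$), so the reduction $\overline{\mathfrak{X}}_h$ over $\FF_5$ is \emph{independent of $h$}. Concretely, modulo $5$ the equation becomes $3w^2 = 7 g(x,y,z) \pmod 5$ where $g$ is the fixed sextic (the displayed polynomial without the $15h$ term), and one checks by the Jacobian criterion over $\FF_5$ (a finite computation, e.g. in \textsc{Magma}) that this single fixed sextic is smooth. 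The point of the construction is that this one verification handles \emph{all} $h$ at once.

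The bridge from the special fibre to the generic fibre is the standard openness of the smooth locus. I would argue as follows: since the reduction $\overline{\mathfrak{X}}_h \otimes \FF_5$ is smooth over $\FF_5$ and $\mathfrak{X}_h \to \operatorname{Spec}\ZZ$ is proper (it is projective), the smooth locus of the morphism is open, so smoothness of the fibre over the prime $(5)$ forces smoothness of the fibre over the generic point $(0)$, namely $X_h$ over $\QQ$. Equivalently and more concretely, the sextic discriminant is a polynomial with integer coefficients in the coefficients of the branch curve; smoothness mod $5$ shows this discriminant is a unit mod $5$, hence nonzero, hence $X_h$ is smooth over $\QQ$. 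Either phrasing gives that the branch curve is smooth and therefore $X_h$ is a K3 surface of degree 2.

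The only genuine content is the single Jacobian check over $\FF_5$, which I expect to be the main (though routine) obstacle: one must confirm that the fixed sextic $g$ really is smooth over $\FF_5$, i.e. that $g$ and its three partial derivatives have no common zero in $\PP^2_{\overline{\FF}_5}$. This is a finite problem well-suited to a short computer algebra computation, and it is precisely where the specific integer coefficients in \eqref{X_h} matter. Once this is in place, the reduction argument transfers smoothness to characteristic $0$ uniformly in $h$, completing the proof that each $X_h$ is a K3 surface of degree 2.
```
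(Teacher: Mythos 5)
Your proposal is correct and follows essentially the same reduction-mod-$p$ strategy as the paper, which reduces modulo $3$ rather than $5$ (both primes kill the $15h$ term, making the special fibre independent of $h$) and then transfers smoothness from the special fibre to the generic fibre over the discrete valuation ring $\ZZ_3$, exactly as in your openness-of-the-smooth-locus argument. The one caveat is that your version hinges on the unperformed Jacobian check that the fixed sextic is smooth over $\FF_5$, whereas the paper performs the analogous check over $\FF_3$, so you would still need to run that finite computation to complete your variant.
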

\begin{proof}
        Let $X_{\FF_3}/\FF_3$ denote the fibre of $\mathfrak{X}_h \rightarrow \operatorname{Spec} \ZZ$ over $(3)\in \operatorname{Spec} \ZZ$, i.e. $X_{\FF_3}$ arises by reduction modulo 3 of $\mathfrak{X}_h$. 
    Since the factor $15h(x,y,z)$ vanishes when reducing modulo 3, we can directly check that $X_{\FF_3}$ is smooth over $\FF_3$. Consider the base change of $\mathfrak{X}_h$ to the $3$-adic integers $\ZZ_3$, which is projective and flat over $\ZZ_3$. 
    Since $\ZZ_3$ is a discrete valuation ring and the special fibre $X_{\FF_3}\rightarrow \operatorname{Spec} \FF_3$ is smooth, the generic fibre 
    $X_h\rightarrow \operatorname{Spec}\QQ_3$ is also smooth. By the Jacobian criterion, this implies that $X_h\rightarrow \operatorname{Spec} \QQ$ is also smooth.
    Since $X_h$ is a double cover of $\PP^2$ branched along a sextic and we have shown that it is smooth, it is indeed a K3 surface of degree 2.
\end{proof}

\subsection{Computing the geometric Picard number}\label{rho1}

\begin{theorem} \label{piconeh}
     Let $h\in \ZZ[x,y,z]$ be a homogeneous polynomial of degree 6. Then the K3 surface $X_h$ has geometric Picard number 1. 
\end{theorem}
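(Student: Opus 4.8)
The plan is to run van Luijk's reduction argument \cite{vluijk} at two primes, taking full advantage of the fact that in equation (\ref{X_h}) the polynomial $h$ appears only inside the term $15h(x,y,z)$. Since $15=3\cdot 5$, reducing (\ref{X_h}) modulo $3$ or modulo $5$ annihilates this term, so the special fibres $\overline{X}_{h,3}$ and $\overline{X}_{h,5}$ do not depend on $h$ and coincide with the reductions of the Enselhans--Jahnel surface obtained by setting $h=0$ \cite[Corollary 30]{eljah}. I would first record that $X_h$ has good reduction at $3$ and at $5$: smoothness of the fibre at $3$ is already proved in Proposition \ref{XhK3}, and the identical Jacobian-criterion check applied to the ($h$-independent) reduction gives smoothness at $5$. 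Good reduction then makes the specialisation homomorphisms
\[
\operatorname{NS}(\overline{X}_h)\hookrightarrow \operatorname{NS}(\overline{X}_{h,3}),\qquad \operatorname{NS}(\overline{X}_h)\hookrightarrow \operatorname{NS}(\overline{X}_{h,5})
\]
injective and compatible with the intersection form, so that $\operatorname{rk}\operatorname{NS}(\overline{X}_h)\leq \min\{\operatorname{rk}\operatorname{NS}(\overline{X}_{h,3}),\operatorname{rk}\operatorname{NS}(\overline{X}_{h,5})\}$.

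Next I would bound the geometric Picard numbers of the two special fibres. For $p\in\{3,5\}$, counting points of $\overline{X}_{h,p}$ over $\FF_{p^n}$ for $n=1,\dots,11$ and applying the Lefschetz trace formula (using $H^0,H^4$ and the vanishing of $H^1,H^3$) determines the characteristic polynomial $P_p(T)$ of Frobenius on $H^2_{\mathrm{et}}(\overline{X}_{h,p},\QQ_\ell)$, a degree-$22$ polynomial whose roots all have absolute value $p$. By the Tate conjecture for K3 surfaces over finite fields, $\operatorname{rk}\operatorname{NS}(\overline{X}_{h,p})$ equals the number of roots of $P_p(T)$ of the form $p\zeta$ with $\zeta$ a root of unity, counted with multiplicity. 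The aim of this step is to verify that for both primes exactly two such roots occur, giving $\operatorname{rk}\operatorname{NS}(\overline{X}_{h,p})=2$ and hence $\operatorname{rk}\operatorname{NS}(\overline{X}_h)\leq 2$. As the reductions agree with those of the $h=0$ surface, this is precisely the point count already carried out in \cite{eljah} and may be quoted from there.

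It remains to rule out $\operatorname{rk}\operatorname{NS}(\overline{X}_h)=2$. If this held, then $\operatorname{NS}(\overline{X}_h)$ would be a rank-$2$ lattice embedding with finite index into each of the rank-$2$ lattices $\operatorname{NS}(\overline{X}_{h,3})$ and $\operatorname{NS}(\overline{X}_{h,5})$; since passing to a finite-index sublattice alters the discriminant only by a perfect square, the classes of $\operatorname{disc}\operatorname{NS}(\overline{X}_{h,3})$ and $\operatorname{disc}\operatorname{NS}(\overline{X}_{h,5})$ in $\QQ^*/(\QQ^*)^2$ would agree. To reach a contradiction I would extract these two discriminant classes from $P_3(T)$ and $P_5(T)$ via the Artin--Tate formula for K3 surfaces, using that the Brauer group has square order so that it does not affect the class modulo squares, and check that the two classes are distinct. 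This is again exactly the computation performed for the $h=0$ surface in \cite{eljah}, whose outcome applies verbatim, and it forces $\operatorname{rk}\operatorname{NS}(\overline{X}_h)=1$.

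The one genuinely new observation is that the whole argument is blind to $h$: every computation takes place on the fixed reductions $\overline{X}_{h,3}$ and $\overline{X}_{h,5}$, which is why the geometric Picard number $1$ statement for the single Enselhans--Jahnel surface propagates to the entire family. The main obstacle is therefore inherited rather than new---namely the reliability of the two underlying numerical inputs, the determination of $P_p(T)$ from point counts and the extraction of the discriminant classes modulo squares from the Artin--Tate formula---which I would either reproduce or cite directly from \cite{eljah}.
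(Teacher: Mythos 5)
Your proposal is correct and is essentially the paper's argument: the paper simply rescales $w$ over $\overline{\QQ}$ to identify $\overline{X}_h$ with the surface $\overline{Y}_h$ and then cites \cite[Theorem 29 and Corollary 30]{eljah}, and what you have written out (reduction at $3$ and $5$ killing the $15h$ term, point counts, the Frobenius eigenvalue bound, and the discriminant comparison modulo squares) is exactly the content of that citation. The only cosmetic difference is that you run van Luijk's method directly on $X_h$ rather than first removing the factor $73/7$, which is harmless since that factor is a unit (and a square up to the obvious rescaling) modulo both primes.
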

\begin{proof}

Let $\mathcal{Y}_h$ be the projective scheme over $\operatorname{Spec} \ZZ$ given by
 \begin{align*}
\begin{split}
w^2 = & \ 
 11x^5y+7x^5z+x^4y^2+5x^4yz+7x^4z^2+7x^3y^3+10x^3y^2z+5x^3yz^2+4x^3z^3 \\
& +6x^2y^4+5x^2y^3z+10x^2y^2z^2+5x^2yz^3+5x^2z^4+11xy^5+5xy^3z^2+12xz^5 \\
& +9y^6+5y^4z^2+10y^2z^4+4z^6+15h(x,y,z),
\end{split}
\end{align*}
with $Y_h$ and $\overline{Y}_h$ denoting its base changes to $\QQ$ and $\overline{\QQ}$, respectively. Note that over $\overline{\QQ}$ we can change coordinates ($w'=\sqrt{73/7}w)$ to get an isomorphism between $\overline{X}_h$ and $\overline{Y}_h$. Therefore, it suffices to show that $Y_h$ has geometric Picard number 1. This is proved in \cite[Theorem 29 and Corollary 30]{eljah}. 
\end{proof}

\subsection{Finding infinitely many rational points} \label{infsec} 

We now find infinitely many $\QQ$-rational points on a large set of surfaces (this is made precise in Theorem \ref{infpts}) containing the surface $X_0$, defined as in \eqref{X_h} setting $h(x,y,z)=0$. To find infinitely many rational points, we follow the techniques described in the proofs of Lemma \ref{deg6ext} and Theorem \ref{bound}. Some restrictions on the coefficients of the defining equations for the K3 surfaces of degree 2 will guarantee that everything takes place over $\QQ$, so we will not need to enlarge the base field.

There are ${{6+3-1}\choose{3-1}}=28$ monomials of degree 6 in 3 variables and, over $\QQ$, a choice of a degree 6 polynomial $f\in\QQ[x,y,z]$ gives a double cover of $\PP^2$ branched along the curve $V(f)$. Therefore, the double covers of $\PP^2$ branched along a sextic curve given by $w^2=f(x,y,z)$ are parametrised by $M\coloneqq \mathbb{A}^{28}_\QQ$. Let $X\in M$ and let $f$ denote the polynomial defining the branch locus; we define the following properties for $X$:
\begin{itemize}
    \item (sm) X is smooth;
    \item (coeff) $f$ has the coefficients of $x^6, x^5y, x^5z, x^4y^2, x^4yz, x^4z^2$ equal to $0,\frac{77}{73}, \frac{49}{73}, \frac{7}{73}, \frac{35}{73}, \frac{49}{73}$, respectively;
    \item (inf) $X$ has infinitely many rational points;
    \item (eqn) $X$ is defined by an equation as in \eqref{X_h};
    \item (pic1) $X$ is smooth and has geometric Picard number 1.
\end{itemize}

Let $M'\cong \mathbb{A}^{22}_\QQ\subset M$ be the subset of double covers of $\PP^2$ satisfying (coeff). Let $V\subset M$ denote the subset satisfying (sm), i.e. $V$ parametrises the K3 surfaces of degree 2 defined over $\QQ$. Let $W\subset M$ be the subset satisfying (sm) and (coeff), so that $W=V\cap M'$. 

\begin{theorem}\label{infpts}
There is a nonempty Zariski open subset $U\subset M'$ such that every surface $X\in U$ defined over $\QQ$ is a K3 surface of degree 2 with infinitely many $\QQ$-rational points.
\end{theorem}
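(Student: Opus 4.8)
The plan is to exhibit a single explicit surface $X_0$ (the case $h=0$) that is a K3 surface of degree 2 with infinitely many $\QQ$-rational points, and then to show that the defining conditions used to produce those rational points are \emph{open} in the parameter space $M'$, so that they persist on a nonempty Zariski open neighbourhood. The K3 property is already settled: Proposition \ref{XhK3} shows every $X_h$ is a K3 surface of degree 2, and smoothness is itself an open condition, so this contributes one open set. The real content is producing infinitely many rational points in a way that deforms.

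First I would run the constructive procedure of Remark \ref{rem:constructive} on $X_0$. Concretely, since $M'$ fixes the coefficients of $x^6,x^5y,x^5z,x^4y^2,x^4yz,x^4z^2$, the branch sextic $B=V(f)$ contains the point $[1:0:0]$ only if the $x^6$ coefficient vanishes, which is exactly the normalisation imposed. I would take the tangent line $\ell$ to $B$ at this rational point $P=[1:0:0]$, as in Lemma \ref{deg6ext}, so that $\pi^{-1}(\ell)$ is a singular genus-1 curve defined over $\QQ$; its normalisation $\widetilde{C}$ has the form $w^2=h(x)$ with $h\in\QQ[x]$ of degree $4$. Following the rescaling trick in Remark \ref{rem:constructive} (replacing $f$ by $f/s$ with $s=h(0)$) one arranges $h(0)=1$, so $\widetilde{C}$ acquires the rational points $(x,w)=(0,\pm1)$ and becomes an elliptic curve over $\QQ$. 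I would then verify computationally (in \textsc{Magma}) that $(0,1)$ is a non-torsion point on $\widetilde{C}_0$; this is a finite, checkable calculation, and it yields infinitely many $\QQ$-points on $X_0$ via the chain $\widetilde{C}\to C\hookrightarrow X_0$ used in the proof of Theorem \ref{bound}.

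The deformation step is where I expect the main obstacle. The tangent line $\ell$, the coefficients of the quartic $h$, and the discriminant/$j$-invariant of the resulting elliptic curve $\widetilde{C}$ are all rational functions of the $22$ free parameters of $M'$, so the constructions of Lemma \ref{deg6ext} vary algebraically over $M'$. I would define $U\subset M'$ by finitely many open conditions: that $X$ be smooth (K3); that the procedure yields a \emph{separable} degree-4 polynomial $h$ (so $\widetilde{C}$ is genuinely genus $1$), which fails only on the vanishing of a discriminant; that $h(0)\neq 0$ after the Remark \ref{rem:constructive} normalisation; and, crucially, that the resulting rational point on $\widetilde{C}$ remain non-torsion. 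The delicate point is the last one: torsion is not a priori an open condition pointwise, but non-torsion is preserved under specialisation in the opposite direction from what one wants. The clean way around this is to note that the point we track is a \emph{section} of the family of elliptic curves over $U$, and its multiples are distinct sections; a point that is non-torsion on the special fibre $X_0$ remains non-torsion on a Zariski open neighbourhood, since the locus where a fixed multiple $[n]$ of the section equals the identity is closed and, for the relevant small $n$, does not contain the base point corresponding to $X_0$. Since $X_0\in U$ by the previous paragraph, $U$ is nonempty, completing the argument.

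\smallskip
Alternatively, and perhaps more robustly, I would avoid tracking torsion entirely by applying Proposition \ref{quadraticext}'s underlying idea at the level of the family: for each $X$ in the candidate open set, the curve $\widetilde{C}$ is an elliptic curve with a marked rational point, and I only need that this curve has positive rank over $\QQ$ for surfaces in $U$. Since rank is lower-semicontinuous in a way that is hard to control in families, I would instead rely on the explicit non-torsion verification at $X_0$ together with the closedness of the $[n]$-torsion loci, as above, which is the cleanest route to a genuinely \emph{open} condition.
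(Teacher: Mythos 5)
Your proposal follows essentially the same route as the paper: restrict to the open locus of smooth surfaces in $M'$, use the fixed coefficients of $x^6,x^5y,x^5z$ to get the rational point $[1:0:0:0]$ and a tangent line to the branch sextic at $[1:0:0]$ that is \emph{constant} across the family, pass to the normalisation of its preimage to get a genus-1 curve with two rational points, impose openly that this curve is genuinely of genus 1 (separability of the quartic), make ``the marked point is non-torsion'' an open condition, and verify nonemptiness at $X_0$. Two remarks. First, your detour through the rescaling trick of Remark \ref{rem:constructive} is unnecessary and slightly off: replacing $f$ by $f/s$ moves you to a different point of $M$, whereas the whole point of fixing the coefficients of $x^4y^2,x^4yz,x^4z^2$ in the definition of $M'$ is that the quartic automatically has constant term $1$ for \emph{every} surface in $M'$, so the points $(0,\pm 1)$ exist uniformly with no renormalisation. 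Second, and more substantively, the openness of the non-torsion condition is the crux, and your justification (``for the relevant small $n$'') leaves the key input implicit: the union over all $n\geq 1$ of the closed loci $\{[n]P_2=O\}$ is a countable union of closed sets, whose complement need not be Zariski open. You need a uniform bound on the possible torsion orders across the family, which is exactly Mazur's theorem: the order lies in $\{1,\dots,10,12\}$, so the single closed condition $2520\,P_2=O$ (with $2520=\operatorname{lcm}(1,\dots,10,12)$) captures all torsion, and its complement is the desired open set. With that citation supplied, your argument coincides with the paper's.
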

\begin{proof}
The singular surfaces in $M'$ form a Zariski closed subset of $M'$, so taking its complement we get an open subset $W\subset M'$. By the description of $M'$ above, the surfaces parametrised by $W$ are K3 surfaces of degree 2 and, since the coefficient of the monomial $x^6$ vanishes in their defining polynomial equation, we have $[1:0:0:0]\in X(\QQ)$ for every $X\in W$. Let $X\in W$, with a double cover of $\PP^2$ given by $\pi: X \rightarrow \PP^2$ 
branched along a smooth curve $B_X=V(f)\subset \PP^2$, where $f\in \QQ[x,y,z]$ is a homogeneous polynomial of degree 6. Then, $X$ is given by $w^2=f(x,y,z)$. Since $X\in W \subset M'$, the coefficients of $x^5y$ and $x^5z$ in $f$ are fixed to be $\frac{77}{73}$ and $\frac{49}{73}$, respectively. Then, the tangent line to the branch curve at $[1:0:0]$ is given by
\[
\ell: 11y + 7z = 0.
\]
The equation of this tangent line is the same for every surface in $W$. The curve $C_X\coloneqq \pi^{-1}(\ell)=V(w^2-f, 11y+7z)\subset\PP(1,1,1,3)$ is contained in $X$ and given in the affine chart where $x\neq 0$ by 
\[
w^2=f(1,y,-11/7y).
\] 
In this equation, the constant term and the coefficient of $y$ are zero (by our previous choice of the coefficients of $x^6, x^5y$ and $x^5z$), and since the coefficients of $x^4y^2, x^4z^2$ and $x^4yz$ in $f$ are also fixed to be $\frac{7}{73}, \frac{49}{73}, \frac{35}{73}$, respectively, $y^2$ has coefficient 1. This means that the point $(y,z,w)=(0,0,0)$ or, in projective coordinates, the point $P\coloneqq [1:0:0:0]$, is a double point in $C_X$.  
Generically, $B_X$ and $\ell$ intersect at exactly five points over $\overline{\QQ}$. 
To see this, recall that a polynomial has repeated roots if and only if its discriminant is zero, and this is a closed condition. Since the intersection of $\ell$ with the $B_X$ yields a degree 6 polynomial with a double root at $P$, dividing by the factor corresponding to the double root yields a degree 4 polynomial $g$. The open condition of the polynomial $g$ having no repeated roots is equivalent to $\ell$ intersecting the $B_X$ at $P$ and four other distinct points.

Let $W'$ be the open subset of $W$ corresponding to the surfaces $X$ with branch curve $B_X$ satisfying $\#(B_X\cap \ell)(\overline{\QQ})=5$. Then the normalisation $\widetilde{C}_X$ of $C_X$ is a smooth curve of genus one, as shown in the proof of Theorem \ref{bound}, and it is given by
\[
w^2=f(1,y,-11/7y)/y^2,
\]
where the right-hand side is a degree 4 polynomial in the variable $y$ with constant term 1. Therefore the points $P_1\coloneqq [1:0:0:1]$ and $P_2\coloneqq [1:0:0:-1]$ are rational points in $\widetilde{C}_X$. Specifying a $\QQ$-rational point, say $P_1$, endows $\widetilde{C}_X$ with the structure of an elliptic curve over $\QQ$ with origin $O=P_1$.   

By Mazur's Theorem \cite[Theorem III.5.1]{Mazur}, the point $P_2$ has finite order if and only if $mP_2=O$ for some $m\in \{1,2,\dots,9,10,12\}$. Since $\operatorname{lcm}(1,2,\dots, 9, 10, 12)=2520$, consider the subset of $W'$ of the surfaces $X$ such that $\widetilde{C}_X$ is a genus 1 curve (and therefore an elliptic curve as seen above) satisfying $2520P_2=O$. This is a closed subset, since the equality $2520P_2=O$ is given in coordinates by polynomial equations depending on the coefficients of the Weierstrass equation for $\widetilde{C}_X$. Let $U$ be the complement in $W'$ of this closed subset. Then, for each $X\in U$, the curve $\widetilde{C}_X$ is an elliptic curve with a non-torsion point $P_2$, so $\widetilde{C}_X$ and hence $X$ have infinitely many rational points over $\QQ$.

Finally, we show that $U$ is nonempty. The surface $S=X_0$ defined as in \eqref{X_h} with $h(x,y,z)=0$ is contained in $M'$ as previously observed. It can be checked that it is smooth and that $\widetilde{C}_S$ is a genus one curve given by
\[
\widetilde{C}_S: w^2=\frac{16771780}{1226911}y^4 - \frac{1540220}{175273}y^3 + \frac{81451}{25039}y^2 - \frac{4078}{3577}y + 1.
\]
This is an elliptic curve with a non-identity point over $\QQ$, since it has at least the two rational points $P_1$ and $P_2$ described above. It can be checked computationally (e.g. using \textsc{Magma} \cite{magma}) that the torsion subgroup of $\widetilde{C}_S$ 
is trivial, so the non-identity point we found is a non-torsion point and hence $S\in U$. 
\end{proof}

\subsection{Proof of Theorem \ref{moduli}}
As we saw in Section \ref{infsec}, the double covers of $\PP^2_\QQ$ branched along a sextic curve are parametrised by $M=\mathbb{A}^{28}_\QQ$. Recall that $W\subset M$ is defined by the properties (sm) and (coeff) defined in Section \ref{infsec}. Let $H\subset M$ be defined by the properties (coeff) and (eqn). Proposition \ref{XhK3} shows that (eqn) implies (sm), so $H\subset W$.

\begin{lemma} \label{Hdense}
    The subset $H$ is Zariski dense in $W$.
\end{lemma}
\begin{proof}
 Let $h\in \ZZ[x,y,z]$ be a homogeneous polynomial of degree 6. Note that a surface $X=X_h$ satisfying (eqn) is in $W$ if and only if the coefficients of $x^6$, $x^5y$, $x^5z$, $x^4y^2$, $x^4yz$, $x^4z^2$ vanish in $h\in \ZZ[x,y,z]$. Therefore $H\cong \ZZ^{22}$ as sets. We may view $H$ as a subset of $ \QQ^{22}\cong \mathbb{A}^{22}(\QQ)\cong M'(\QQ)$ 
via the injective map $\varphi: \ZZ^{22}\hookrightarrow\QQ^{22}$
sending the coefficients of the monomials in $h$ to the coefficients of the monomials in $X_h$, in both cases excluding the coefficients of the monomials $x^6$, $x^5y$, $x^5z$, $x^4y^2$, $x^4yz$, $x^4z^2$, which are zero in $h$, and in $X_h$ as in the surfaces parametrised by $M'$. We claim that $H$ is Zariski dense in $M'$. Suppose for a contradiction that $\overline{\varphi(\ZZ^{22})}\neq M'\cong \mathbb{A}_{\QQ}^{22}$. Then $\overline{\varphi(\ZZ^{22})}=V(g)\subset \mathbb{A}_\QQ^{22}$, for some nonzero polynomial $g\in \QQ[x_1,\dots,x_{22}]$. This implies that if $(\alpha_1, \dots, \alpha_{22})\in \ZZ^{22}$, then \[g(\varphi(\alpha_1,\dots,\alpha_{22}))=g(\alpha_1+c_1,\dots,\alpha_{22}+c_{22})=0,\] where $c_i\in \QQ$ are the coefficients of the corresponding monomials in the equation defining $X_0$. 
We may assume without loss of generality that the polynomial 
\(
\widetilde{g}(x)= g(\varphi(x,1,\dots,1))
\)
is nonconstant, so it is a nonzero polynomial in one variable. By our observation above, $\widetilde{g}(\alpha)=0$ for every $\alpha \in \ZZ$, which is not possible.
This proves the claim that $H\subset W\subset M'$ is Zariski dense in $M'$, so in particular, $H$ is Zariski dense in $W$.
\end{proof}

Since $M=\mathbb{A}_\QQ^{28}$ parametrises polynomials of degree 6 in the variables $x,y,z$, we have that $\operatorname{GL}(3, \QQ)$ induces an action on $M$ by acting linearly on $x,y,z$. If there exists $A\in \operatorname{GL}(3, \QQ)$ sending a K3 surface $X\in M(\QQ)$ to $Y\in M(\QQ)$, which we denote by $A(X)=Y$, then $X$ and $Y$ are isomorphic. Over the algebraic closure $\overline{\QQ}$, the double covers of $\PP^2_{\overline{\QQ}}$ branched along a sextic are parametrised, up to isomorphism, by $\PP^{27}_{\overline{\QQ}}$, endowed with the action
 of $\operatorname{PGL}(3, \overline{\QQ})$ (or by $\mathbb{A}^{28}_{\overline{\QQ}}$ with the action of $\operatorname{GL}(3, \overline{\QQ})$).

Let $\mathcal{M}_2$ denote the (coarse) moduli space of K3 surfaces  of degree 2 and consider the projection map
\[
\psi : V \rightarrow \mathcal{M}_2.
\]
Then, $\mathcal{M}_2$ contains an open dense $\mathcal{U}\subset \mathcal{M}_2$ such that, for every $X, Y\in \psi^{-1}(\mathcal{U})$, we have $\psi(X)=\psi(Y)$ if and only if there exists $A\in \operatorname{PGL}(3,\overline{\QQ})$ such that $X=A(Y)$ \cite[\S 5.2]{Huyb}.

\begin{lemma} \label{dominant}
The restriction map $\psi|_{W}$ is dominant. 
\end{lemma}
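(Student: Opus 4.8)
The plan is to use the quoted property of $\mathcal U$, which identifies the generic fibres of $\psi$ with $\operatorname{GL}(3)$-orbits: the scalar matrices in $\operatorname{GL}(3)$ act on a degree-$6$ polynomial $f$ by scaling, which produces an isomorphic surface, so the $\operatorname{GL}(3,\overline{\QQ})$-orbits of polynomials in $M$ coincide with the $\operatorname{PGL}(3,\overline{\QQ})$-orbits of surfaces appearing in \cite[\S 5.2]{Huyb}. In particular $\psi$ is constant on $\operatorname{GL}(3)$-orbits. Granting this, I would first reduce to a density statement: $\psi|_V$ is dominant if and only if $\operatorname{GL}(3)\cdot V$ is dense in $H$. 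Indeed, if $\operatorname{GL}(3)\cdot V$ were not dense, its complement would contain a nonempty open $\operatorname{GL}(3)$-invariant subset, and its image under the dominant, orbit-invariant map $\psi$ would be a nonempty open subset of $\mathcal M_2$ meeting $\overline{\psi(V)}$ in the empty set. Since dominance may be checked after base change to $\overline{\QQ}$, I would work over $\overline{\QQ}$ throughout.

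Because $V=M'\cap H$ is dense in $M'\cong\mathbb{A}^{22}$, because $H$ is open and dense in $M=\mathbb{A}^{28}$, and because $\operatorname{GL}(3)$ preserves the smooth locus $H$, it suffices to prove that the action morphism
\[
\mu:\operatorname{GL}(3)\times M'\longrightarrow M,\qquad (A,f)\mapsto A\cdot f,
\]
is dominant. I would establish this by exhibiting one point at which $d\mu$ is surjective, the natural candidate being $(\mathrm{id},f_0)$ with $f_0$ the polynomial defining $X_0$ (which lies in $M'$). At this point the image of $d\mu$ is $\mathfrak{gl}_3\cdot f_0+T_{f_0}M'$, where $\mathfrak{gl}_3\cdot f_0=\operatorname{span}\{x_i\,\partial_{x_j}f_0\}$ is spanned by the nine polynomials coming from the infinitesimal action, and $T_{f_0}M'$ is the $22$-dimensional space of degree-$6$ polynomials whose coefficients of $x^6,x^5y,x^5z,x^4y^2,x^4yz,x^4z^2$ all vanish. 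As $T_{f_0}M'$ has codimension $6$ in $M$, surjectivity of $d\mu$ is equivalent to the six functionals ``coefficient of $x^6,x^5y,x^5z,x^4y^2,x^4yz,x^4z^2$'' being linearly independent on $\mathfrak{gl}_3\cdot f_0$.

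This last point is the crux, and it reduces to a concrete rank computation: writing out the nine polynomials $x_i\,\partial_{x_j}f_0$ and recording their six relevant coefficients produces a $9\times 6$ matrix, and one verifies that it has rank $6$. This is routine — it can be done by hand or in \textsc{Magma} — and it is the only genuinely computational step; conceptually it says that the $\operatorname{GL}(3)$-orbit of $X_0$ is transverse to $M'$, so that a generic degree-$2$ K3 surface can be put into the normal form cut out by $M'$ after a linear change of the variables $x,y,z$. I expect the main subtlety not to be this calculation but the opening reduction: one must be sure that $\psi$ is genuinely orbit-invariant and that its generic fibres are full $\operatorname{GL}(3)$-orbits, which is precisely what the cited description of $\mathcal U$ guarantees. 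With that in hand, the argument is dimension-counting together with the single rank check at $X_0$.
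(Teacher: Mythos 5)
Your argument is correct, and it reaches the same intermediate target as the paper --- that the $\operatorname{GL}(3,\overline{\QQ})$-orbit of $M'$ is dense in $M$, which combined with the description of the generic fibres of $\psi$ as $\operatorname{PGL}_3$-orbits yields dominance --- but you establish that density by a genuinely different mechanism. The paper gives an explicit normal-form argument: for any $X\in\psi^{-1}(\mathcal U)$ it constructs linear transformations $A_1,A_2,A_3$ (send a point of the branch sextic and its tangent line to $[1:0:0]$ and $V(y)$, which kills the $x^6$ and $x^5z$ coefficients and normalises $x^5y$; then a triangular substitution, whose existence is proved by solving three polynomial equations in four unknowns over $\overline{\QQ}$, fixes the $x^4y^2$, $x^4yz$, $x^4z^2$ coefficients; then rescale) landing $X$ in $V$. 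You instead check surjectivity of the differential of the action map $\operatorname{GL}(3)\times M'\to M$ at the single point $(\mathrm{id},f_0)$, which gives smoothness, hence openness, hence dominance. Your route is shorter and isolates the real content (transversality of the orbit of $X_0$ to $M'$) in one rank computation, at the cost of being non-constructive, whereas the paper's version actually produces the normalising transformation for each surface --- in the same constructive spirit as Remark \ref{rem:constructive}. The rank check you defer does succeed: up to the harmless overall factor $7/73$, the $x^6$-coefficient of $x\,\partial_y f_0$ is $11\neq 0$, and the two $3\times 3$ blocks formed by $\{y\partial_x f_0,\,y\partial_y f_0,\,y\partial_z f_0\}$ on the columns $(x^5y,\,x^4y^2,\,x^4yz)$ and by $\{z\partial_x f_0,\,z\partial_y f_0,\,z\partial_z f_0\}$ on $(x^5z,\,x^4yz,\,x^4z^2)$ both equal $\bigl(\begin{smallmatrix}0&55&35\\ 11&2&5\\ 7&5&14\end{smallmatrix}\bigr)$, of determinant $-5110\neq 0$, so those seven rows already have rank $6$. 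Two small points to tidy: only the implication ``$\operatorname{GL}(3)\cdot V$ dense in $H$ $\Rightarrow$ $\psi|_V$ dominant'' is needed, and your sketch of the converse (the image of an invariant open set under $\psi$ need not be open) should be dropped; and you should record that $V=M'\cap H$ is a nonempty (it contains $X_0$) open, hence dense, subset of the irreducible $M'$, so dominance of the action map on $\operatorname{GL}(3)\times M'$ does transfer to $\operatorname{GL}(3)\times V\subset H$.
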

\begin{proof}
It suffices to check this over the algebraic closure $\overline{\QQ}$. Let $\mathcal{U}\subset \mathcal{M}_2$ be the open dense subset with the property above. Let $X\in \psi^{-1}(\mathcal{U})$ be a K3 surface of degree 2 and let $X\rightarrow \PP^2$ be a double cover branched along a smooth sextic curve $B$. Let $P\in B(\overline{\QQ}) $ and let $\ell$ denote the tangent line to $B$ at $P$. Then there exists a linear transformation $A_1$ sending $P$ to the point $[1:0:0]$ and $\ell$ to the line $V(y)\subset \PP^2$, so $[1:0:0:0]\in A_1(X)(\overline{\QQ})$ and $V(y)$ is tangent to the branch curve of $A_1(X)$ at $[1:0:0]$. This implies that, in the equation defining the K3 surface $A_1(X)$, the coefficients of $x^6$ and $x^5z$ vanish and we can assume that the coefficient of $x^5y$ is $1$. Let $\beta_1, \beta_2, \beta_3 \in \overline{\QQ}$ denote the coefficients of $x^4y^2$, $x^4yz$ and $x^4z^2$ in the equation defining $A_1(X)$, respectively. We claim that there exists a linear transformation $A_2$ sending $A_1(X)$ to a surface such that its defining equation has $0$, $11$, $7$, $1$, $5$ and $7$ as the coefficients of $x^6$, $x^5y$, $x^5z$, $x^4y^2$, $x^4yz$ and $x^4z^2$, respectively.
Once we prove this claim, we are done: we can take the linear transformation $A_3$ sending the variables $x$, $y$ and $z$ to $\sqrt[6]{7/73}x$, $\sqrt[6]{7/73}y$ and $\sqrt[6]{7/73}z$, respectively, and we have that $Y \coloneqq A_3(A_2(A_1(X)))\in V(\overline{\QQ})\cap M'(\overline{\QQ})=W(\overline{\QQ})$, so $\psi|_{W}(Y)=\psi(X)\in \mathcal{U}$. 

To prove the claim, consider a transformation $A_2$ given by
\begin{align*}
x \mapsto &  \ x+ay+bz, \\
y \mapsto & \  11y+7z, \\
z \mapsto & \ cy+dz,
\end{align*}
for some $a,b,c,d\in \overline{\QQ}$. We want to find $a,b,c,d\in \overline{\QQ}$ such that the corresponding transformation $A_2$ is as claimed. Note that, to look at the coefficients of $x^6$, $x^5y$, $x^5z$, $x^4y^2$, $x^4yz$ and $x^4z^2$ in the equation defining the surface $A_2(A_1(X))$, we only need to check how $A_2$ acts on the coefficients of those same six monomials in the equation defining $A_1(X)$, by our choice of transformation $A_2$. Since the coefficient of $x^6$ in the equation of $A_1(X)$ is zero, the coefficient of $x^6$ in the equation of $A_2(A_1(X))$ also vanishes. Looking at how the other coefficients transform under the action of $A_2$, we have that $A_2(A_1(X))$ is as claimed if the following three equations hold.

\begin{align} 
    1 = & \ 121\beta_1+11\beta_2c+\beta_3c^2+55a, \label{x^4y^2} \\
    5 = & \ 154\beta_1+7\beta_2c+11\beta_2d+2\beta_3cd+35a+55b, \label{x^4yz} \\
    7 = & \ 49\beta_1+7\beta_2d+\beta_3d^2+35b. \label{x^4z^2}
\end{align}
Isolating $a$ and $b$ in equations \eqref{x^4y^2} and \eqref{x^4z^2} and substituting them in equation \eqref{x^4yz} yields a polynomial equation in the two variables $c,d$. Set, for example, $c=1$ and choose $d\in \overline{\QQ}$ to be a root of the resulting polynomial in one variable. This gives a transformation $A_2\in \operatorname{GL}(3, \overline{\QQ})$ such that $A_2(A_1(X))$ is as claimed, and this finishes the proof. 
\end{proof}

Let us recall the definitions of the subsets of $M= \mathbb{A}_\QQ^{28}$ involved in the next proof. The subset $M'$ is defined by the property (coeff), $V$ is defined by (sm), $W$ is defined by (sm) and (coeff), $H$ is defined by (coeff) and (eqn) (which implies (sm) by Proposition \ref{XhK3}). Let $U$ denote the nonempty open subset arising from Theorem \ref{infpts}, which satisfies (sm), (coeff) and (inf). Let $T$ be the subset of $U$ also satisfying (eqn), so $T=U\cap H\subset W$. 
Let $S\subset V$ denote the subset satisfying (inf) and (pic1). Then, proving Theorem \ref{moduli} corresponds to showing that $S$ is Zariski dense in the (coarse) moduli space of K3 surfaces of degree 2 denoted by $\mathcal{M}_2$. 
\begin{proof}[Proof of Theorem \ref{moduli}]
Recall that
$ \psi\colon V \rightarrow \mathcal{M}_2$ denotes
the projection map and $W=M'\cap V$.
By Lemma \ref{dominant} we have that $\psi(W)$ is dense in $\mathcal{M}_2$.
Therefore, to show that $\psi(S)$ is Zariski dense in $\mathcal{M}_2$, it suffices to show that $S\cap W$ is dense in $W$. Let $H$, $U$ and $T$ be defined as above. By Lemma \ref{Hdense}, $H$ is Zariski dense in $W$.
As $T\subset U$, we have that $T$ satisfies (inf). Proposition \ref{XhK3} and Theorem \ref{piconeh} imply that $T$ also satisfies (pic1), so $T\subset S\cap W\subset W$. Since $M'$ is irreducible, $U$ is dense in $M'$. The intersection of two dense sets is dense whenever one of them is open, so $T=U\cap H$ is dense in $W$. This implies that $S\cap W$ is also dense in $W$ and this concludes the proof.
\end{proof}

\end{document}